\newcommand{\E}{\mathbb{E}}
\newcommand{\Var}{\mathrm{Var}}
\newcommand{\Cov}{\mathrm{Cov}}
\newcommand{\tr}{\operatorname{Tr}}
\newcommand{\R}{\mathbb{R}}
\newcommand{\LL}{L}
\newcommand{\UU}{R}
\newcommand{\dimension}{d}
\newcommand{\observations}{n}
\newcommand{\fixed}{M}
\newcommand{\fixedvec}{m_i}
\newcommand{\ltwo}{$L_2$ }
\newtheorem{theorem}{Theorem}
\newtheorem{lemma}[theorem]{Lemma}
\newtheorem{corollary}[theorem]{Corollary}
\newtheorem{definition}[theorem]{Definition}
\newtheorem{assumption}{Assumption}
\newtheorem{remark}[theorem]{Remark}
\begin{document}
\title{Linear Models are Most Favorable among Generalized Linear Models} 


\author{%
  \IEEEauthorblockN{Kuan-Yun Lee and Thomas~A.~Courtade}
  \IEEEauthorblockA{Department of Electrical Engineering and Computer Sciences\\
                    University of California, Berkeley\\ 
                    \{timkylee, courtade\}@berkeley.edu}
}


\maketitle





\begin{abstract}
We establish a nonasymptotic lower bound on the $L_2$ minimax risk for a class of generalized linear models.  It is further shown that the minimax risk for the  canonical linear model matches this lower bound up to a universal constant.  Therefore, the canonical linear model may be regarded as most favorable among the considered class of generalized linear models (in terms of minimax risk).  The proof makes use of an information-theoretic Bayesian Cram\'er-Rao bound for log-concave priors, established by Aras et al. (2019).

\end{abstract}

\section{Introduction and Main Results}
\label{sec:intro}

As their name suggests, generalized linear models (GLMs) are a flexible class of parametric statistical models that generalize the class of linear models relating a random  observation $X \in \mathbb{R}^{\observations}$ to a parameter $\theta\in \mathbb{R}^\dimension$ via the linear relation
\begin{align}
    X = M \theta + Z, \label{eq:LinearModel}
\end{align}
where   $M \in \mathbb{R}^{\observations\times \dimension}$ is a known (fixed)  design matrix, and $Z \in \mathbb{R}^{\observations}$ is a random noise vector. For a univariate GLM in canonical form with natural parameter $\eta \in \R$, the density of observation $X \in \R$ given $\eta$ is expressed as the exponential family
$$
f(x; \eta) = h(x) \exp\left( \frac{ \eta x  - \Phi(\eta)}{s(\sigma)} \right),  
$$
for known functions $h: \mathcal{X}\subseteq \R \to [0,\infty)$ (the \emph{base measure}), $\Phi: \mathbb{R} \to \mathbb{R}$ (the \emph{cumulant function}) and a scale parameter $s(\sigma)>0$. For this general class of models, the question of central importance is how well one can estimate $\eta$ from an observation $X \sim f(\cdot ; \eta)$, where $f(\cdot ; \eta)$ is understood to be a density on a probability space $(\mathcal{X} \subseteq \mathbb{R},\mathcal{F})$ with respect to a dominating $\sigma$-finite measure $\lambda$.  This class of models captures a wide variety of parametric  models such as binomial, Gaussian, Poisson, etc.  
As a specific example, we can take $\mathcal{X} = \{0,1,2, \ldots\}$ equipped with the counting measure $\lambda$.  For $h(x) = 1/x!$,  $\Phi(t) = e^{t}$ and $s(\sigma) = 1$, the observation  $X\sim f(\cdot; \eta)$  will  be $\mathrm{Poisson}(e^{\eta})$.

In this paper, we restrict our attention to multivariate GLMs of the form 
\begin{equation} \label{eq:glm}
    \begin{split}
        f(x; \theta) = \prod_{i=1}^\observations \left\{h(x_i) \exp\left(\frac{x_i \langle \fixedvec,\theta\rangle - \Phi(\langle \fixedvec,\theta\rangle)}{s(\sigma)}\right)\right\},
    \end{split}
\end{equation}
for a real parameter $\theta \in \mathbb{R}^\dimension$ and a fixed design matrix $M \in \mathbb{R}^{\observations\times \dimension}$, with rows given by the  vectors $\{m_i\}_{i=1}^n \subset \mathbb{R}^d$. 
In words, the above model assumes each $X_i$ is drawn from the same exponential family, with respective natural  parameters  $\langle \fixedvec,\theta\rangle$, $i=1,2, \ldots, n$. This captures the linear model \eqref{eq:LinearModel} in the usual case where the noise vector $Z$ is assumed to be standard normal on $\R^n$, but is also flexible enough to capture many other models of interest.

In terms of parameter estimation, a typical figure of merit is the constrained $L_2$ minimax risk, which corresponds to the worst-case $L_2$ estimation error, where $\theta$ is allowed to range over a constrained set $\Theta$. For our purposes, we take $\Theta$ equal to the Euclidean ball in $\mathbb{R}^d$, denoted $\mathbb{B}_2^\dimension(1):= \{v: v \in \mathbb{R}^\dimension, \|v\|_2^2 \leq 1\}$, which is a common choice in applications.   More precisely, we make the following definition. 
\begin{definition}
For the generalized linear model \eqref{eq:glm}, we define the associated minimax risk via
$$
R^*(h,\Phi, M,s(\sigma))  := \inf_{\hat{\theta}} \sup_{\theta \in \mathbb{B}_2^\dimension(1)} \mathbb{E}\|\theta - \hat{\theta}\|_2^2, 
$$
where the expectation is over $X\sim f(\cdot; \theta)$, and the infimum is over all $\mathbb{R}^d$-valued estimators $\hat{\theta}$ (i.e., measurable functions of the observation  $X$).  
\end{definition}

Before we state our main results, we make the following assumption throughout:
\vskip1ex

\begin{assumption}\label{assumption:CumulantFn}
We assume the cumulant function $\Phi: \mathbb{R}\to \mathbb{R}$ in \eqref{eq:glm} is twice-differentiable, with second derivative uniformly bounded as $\Phi'' \leq L$, for some $L>0$. 
\end{assumption}

\begin{remark}
This assumption is standard in the literature on minimax estimation for GLMs, and is equivalent to the map $\theta \longmapsto \E_{X\sim f(\cdot; \theta)}[X]$ being $L$-Lipschitz. See, for example, \cite{abramovich2016model, muller2005generalized, negahban2012unified, loh2015regularized}. 
\end{remark}

Our first main result is a general lower bound on the minimax risk for the class of GLMs introduced above.
\begin{theorem}\label{thm:minimaxBound}
The $L_2$ minimax risk for the class of models \eqref{eq:glm} is lower bounded according to
\begin{equation} \label{eq:minimaxBound}
R^*(h,\Phi, M,s(\sigma))   \gtrsim \min\left( \frac{s(\sigma)}{L} \, \tr\left((\fixed^\top \fixed)^{-1}\right) , 1\right),
\end{equation}
where $\gtrsim$ denotes inequality, up to a universal constant. 
\end{theorem}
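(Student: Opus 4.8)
The plan is to lower bound the minimax risk by the Bayes risk of a carefully chosen prior supported on $\mathbb{B}_2^\dimension(1)$, and to control that Bayes risk through the information-theoretic Bayesian \cramerrao bound of Aras et al. For any prior $\pi$ supported on $\mathbb{B}_2^\dimension(1)$ we have $R^* = \inf_{\hat\theta}\sup_{\theta}\E\|\theta-\hat\theta\|_2^2 \geq \inf_{\hat\theta}\E_{\theta\sim\pi}\E\|\theta-\hat\theta\|_2^2$, so it suffices to exhibit one prior whose Bayes risk is of the advertised order. The Aras et al. bound is tailor-made for this: for a \emph{log-concave} prior $\pi$ with covariance $\Sigma_\pi$, the Bayes risk is bounded below by $\tr[(\E_\pi[\mathcal I_X(\theta)] + \Sigma_\pi^{-1})^{-1}]$, where $\mathcal I_X(\theta)$ is the Fisher information of the model. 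The essential feature, and the reason the classical van Trees inequality will not suffice, is that our prior must be compactly supported in $\mathbb{B}_2^\dimension(1)$, so its Fisher information is infinite; the log-concave bound replaces the prior's Fisher information with the finite inverse covariance $\Sigma_\pi^{-1}$, keeping the bound nontrivial.

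First I would compute the model Fisher information. Differentiating $\log f(x;\theta)$ twice in $\theta$, the term linear in $x_i$ does not survive, leaving the deterministic Hessian and hence $\mathcal I_X(\theta) = \frac{1}{s(\sigma)}\sum_{i=1}^\observations \Phi''(\langle \fixedvec,\theta\rangle)\, \fixedvec \fixedvec^\top$. Invoking Assumption~\ref{assumption:CumulantFn}, $\Phi'' \leq L$, this gives the uniform positive-semidefinite bound $\mathcal I_X(\theta)\preceq \frac{L}{s(\sigma)}\fixed^\top\fixed$, valid for every $\theta$, so that $\E_\pi[\mathcal I_X(\theta)]\preceq \frac{L}{s(\sigma)}\fixed^\top\fixed$. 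By monotonicity of $A\mapsto\tr(A^{-1})$ on positive-definite matrices, the Bayes risk is then at least $\tr[(\frac{L}{s(\sigma)}\fixed^\top\fixed + \Sigma_\pi^{-1})^{-1}]$.

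Next I would choose the prior to align with the geometry of $\fixed^\top\fixed$: take $\pi$ to be the centered Gaussian with covariance $\gamma(\fixed^\top\fixed)^{-1}$, conditioned to lie in $\mathbb{B}_2^\dimension(1)$; since conditioning a log-concave law on a convex set preserves log-concavity, this prior is admissible. Choosing the tuning parameter $\gamma$ so that the unconditioned Gaussian has $\E\|\theta\|_2^2 = \gamma\,\tr((\fixed^\top\fixed)^{-1})$ equal to a small absolute constant, a Markov/concentration argument places most of the mass inside the ball, so the conditioned covariance $\Sigma_\pi$ stays comparable to $\gamma(\fixed^\top\fixed)^{-1}$ up to a universal constant. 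Both matrices inside the bound are then multiples of $\fixed^\top\fixed$, and the trace collapses to a scalar multiple of $\tr((\fixed^\top\fixed)^{-1})$. Writing $T := \frac{s(\sigma)}{L}\tr((\fixed^\top\fixed)^{-1})$, the resulting expression is of order $\tfrac{T}{1+T}$, which is $\gtrsim T$ when $T\lesssim 1$ and $\gtrsim 1$ otherwise, i.e.\ exactly $\min(T,1)$. The choice of $\gamma$, large enough that $\Sigma_\pi^{-1}$ does not overwhelm $\frac{L}{s(\sigma)}\fixed^\top\fixed$ but small enough that the prior fits in the ball, is what manufactures the minimum.

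I expect the main obstacle to be the interplay between the support constraint and the prior covariance. Aligning the prior with $(\fixed^\top\fixed)^{-1}$ is what makes the trace come out cleanly, but the aligned ellipsoid is elongated precisely along the poorly observed directions, so forcing it into $\mathbb{B}_2^\dimension(1)$ must proceed through the soft, in-expectation containment above rather than hard containment. The technical crux is therefore to certify that conditioning the Gaussian on $\mathbb{B}_2^\dimension(1)$ shrinks the covariance by at most a universal constant in every direction, equivalently that $\Sigma_\pi\succeq c\,\gamma(\fixed^\top\fixed)^{-1}$, so that $\Sigma_\pi^{-1}$ remains a controlled multiple of $\fixed^\top\fixed$; this is where log-concavity of $\pi$ and the fact that $\mathbb{B}_2^\dimension(1)$ carries most of the Gaussian mass are used. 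Everything else reduces to the scalar optimization over $\gamma$ sketched above.
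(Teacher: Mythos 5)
Your high-level instinct matches the paper's: lower bound minimax risk by Bayes risk for a log-concave prior and use the Aras et al. bound precisely because compactly supported priors have infinite Fisher information, so van Trees is vacuous. Your Fisher information computation $\mathcal{I}_X(\theta) = \frac{1}{s(\sigma)}\sum_i \Phi''(\langle m_i,\theta\rangle)\, m_i m_i^\top \preceq \frac{L}{s(\sigma)} \fixed^\top \fixed$ is also correct. But the proof hinges on a key inequality that does not exist in the cited work: you invoke Aras et al. as giving the \emph{matrix-form} bound $\mathrm{Bayes\ risk} \gtrsim \tr\bigl[\bigl(\E_\pi[\mathcal{I}_X(\theta)] + \Sigma_\pi^{-1}\bigr)^{-1}\bigr]$ for log-concave priors. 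What Aras et al. actually prove (Lemma \ref{lem:alpc} in the paper) is a scalar mutual-information bound, $I(X;\theta) \leq d\,\phi\bigl(\tr(\Cov(\theta))\cdot\tr(\E\,\mathcal{I}_X(\theta))/d^2\bigr)$, which couples all directions through the \emph{product of traces}. This distinction is fatal for exactly the prior you choose. With $\Sigma_\pi \approx \gamma (\fixed^\top\fixed)^{-1}$ and $\E\,\mathcal{I}_X(\theta) \preceq \frac{L}{s(\sigma)}\fixed^\top\fixed$, the quantity the actual lemma sees is $\tr(\Cov)\cdot\tr(\E\,\mathcal{I}) \asymp \gamma \frac{L}{s(\sigma)} \tr\bigl((\fixed^\top\fixed)^{-1}\bigr)\tr\bigl(\fixed^\top\fixed\bigr)$, not $\tr(\Cov\,\E\,\mathcal{I}) \asymp \gamma \frac{L}{s(\sigma)} d$. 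Since $\tr(A)\tr(A^{-1}) \geq d^2$ with equality only for well-conditioned $A$, a global application of the real lemma yields a risk bound of order $\frac{s(\sigma)}{L}\cdot\frac{d^2}{\tr(\fixed^\top\fixed)}$, which is weaker than the target $\frac{s(\sigma)}{L}\tr\bigl((\fixed^\top\fixed)^{-1}\bigr)$ by a factor that grows without bound as $\fixed^\top\fixed$ becomes ill-conditioned. The anisotropic aligned prior — the centerpiece of your construction — is precisely the regime where the trace coupling is lossiest.

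This is why the paper's proof is structured the way it is: it rotates so that $\fixed^\top\fixed$ is diagonal, takes a \emph{product} of uniforms on a rectangle $\prod_i[-\epsilon_i/2,\epsilon_i/2]$, applies the one-dimensional version of Lemma \ref{lem:alpc} to each coordinate $\theta_i$ separately (so no cross-direction coupling ever occurs), and then needs Lemma \ref{lem:independent} — a Cauchy--Schwarz argument comparing the marginalized Fisher information $\E\,\mathcal{I}_X(\theta_i)$ to the diagonal entry $\E[\mathcal{I}_X(\theta)]_{ii}$, which requires independence of the coordinates — plus the allocation Lemma \ref{lem:hard-lemma} to choose the $\epsilon_i$ in the regime where the bound saturates at a constant. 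Your proposal would need to be repaired along these lines, and note that your Gaussian-conditioned-on-the-ball prior is an obstacle to that repair: conditioning on $\mathbb{B}_2^d(1)$ destroys coordinate independence, which the per-coordinate Fisher comparison relies on (the rectangle prior sidesteps this by having hard support inside the ball). Two smaller omissions: you never treat rank-deficient $\fixed$, where $(\fixed^\top\fixed)^{-1}$ does not exist and your prior construction breaks (the paper handles it separately with a prior supported in the kernel of $\fixed$, giving $I(X;\theta_k)=0$ and a constant bound); and your covariance-preservation claim under conditioning, while fixable by a Cauchy--Schwarz tail argument, is asserted rather than proved.
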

\begin{remark}
In case $\fixed^\top \fixed$ is not invertible, we adopt the convention that $\tr\left((\fixed^\top \fixed)^{-1}\right)= +\infty$.  This situation occurs when $M$ is not full rank, in which case $\theta$ is not identifiable in the null space of $M$ and constant error is unavoidable.
\end{remark}
\begin{remark}
In fact, with minor modification, Theorem \ref{thm:minimaxBound} holds for the more general class of GLMs with observations drawn from densities of the form
\begin{align*}
        f(x; \theta) = \prod_{i=1}^\observations \left\{h_i(x_i) \exp\left(\frac{x_i \langle \fixedvec,\theta\rangle - \Phi_i(\langle \fixedvec,\theta\rangle)}{s_i(\sigma)}\right)\right\}.
\end{align*}
See Section \ref{subsec:Rmks} for further remarks. 
\end{remark}

\begin{remark} \label{remark:constant}
Since minimax risk is generally characterized modulo universal constants, the statement \eqref{eq:minimaxBound} in terms of $\gtrsim$ is sufficient for our purposes.  However, a careful analysis of our arguments reveals that $\gtrsim$ can be replaced with $\geq$ at the expense of including a modest constant in the RHS of \eqref{eq:minimaxBound} (e.g.,  $ 1 / (\pi e^3)$).
\end{remark}


Most interestingly, the minimax bound \eqref{eq:minimaxBound} holds uniformly over the class of GLMs given by \eqref{eq:glm}, and is of the correct order for the canonical linear model \eqref{eq:LinearModel}. Indeed, under the linear  model $X = L \fixed\theta + Z$, where $Z$ is standard Gaussian with covariance $\sigma^2 L \cdot I$ and  the design matrix $\fixed$ is full rank, the maximum likelihood estimator (MLE) estimator $\hat{\theta}_{\text{MLE}}$ is given by
\begin{equation*}
    \hat{\theta}_{\text{MLE}} =  L^{-1} (M^\top M)^{-1} M^\top X.
\end{equation*}
One can explicitly calculate the \ltwo error as 
\begin{align}
        \E \|\theta -  \hat{\theta}_{\text{MLE}}\|_2^2 &= \E \| \theta -  L^{-1}(M^\top M)^{-1} M^\top X\|_2^2 \notag\\
        &= \frac{1}{L^2} \E \| (M^\top M)^{-1} M^\top Z\|_2^2 \notag\\
        &= \frac{\sigma^2}{L} \tr((M^\top M)^{-1}).
        \label{eq:linearregression}
\end{align}
The linear  model in this case corresponds to $h(x) = e^{-x^2/(2 L \sigma^2)}$,  $s(\sigma) = \sigma^2$, and $\Phi(t) = L t^2/2$ in \eqref{eq:glm}.

Comparing \eqref{eq:linearregression} to Theorem \ref{thm:minimaxBound}, we find that our minimax lower bound is achieved (up to a universal constant) for linear models of the above form. To summarize, we have the following:
\begin{corollary}
Fix a design matrix $M$, scale parameter $s(\sigma)$ and $L>0$.  Among those generalized linear models in \eqref{eq:glm} with $\Phi'' \leq L$, linear models are most favorable in terms of minimax risk.    More precisely, among this class of models, 
$$
R^*(h,\Phi, M,s(\sigma))  \gtrsim   R^*( e^{-(\cdot)^2/(2 L s(\sigma))}  ,  (\cdot)^2 L/2, M,s(\sigma)).
$$
\end{corollary}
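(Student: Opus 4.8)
The plan is to combine the general lower bound of Theorem~\ref{thm:minimaxBound} with a matching upper bound for the specific Gaussian linear model appearing on the right-hand side. First I would observe that the model $e^{-(\cdot)^2/(2Ls(\sigma))}$, $(\cdot)^2 L/2$ is itself a member of the class \eqref{eq:glm}: it corresponds to $\Phi(t) = Lt^2/2$, so that $\Phi'' = L$ and Assumption~\ref{assumption:CumulantFn} holds with equality. Hence Theorem~\ref{thm:minimaxBound} applies to the left-hand quantity $R^*(h,\Phi,M,s(\sigma))$ for \emph{every} model in the class, yielding
$$
R^*(h,\Phi,M,s(\sigma)) \gtrsim \min\left(\frac{s(\sigma)}{L}\,\tr((\fixed^\top\fixed)^{-1}),\,1\right).
$$
It therefore suffices to show that the right-hand side of the corollary is bounded above by this same quantity.

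For the upper bound I would exhibit two estimators and take whichever is better in each regime. The MLE computation already recorded in \eqref{eq:linearregression} shows that the maximum-likelihood estimator attains $L_2$ risk exactly $\frac{s(\sigma)}{L}\tr((\fixed^\top\fixed)^{-1})$, uniformly in $\theta$; since the minimax risk is an infimum over estimators, this gives
$$
R^*\!\left(e^{-(\cdot)^2/(2Ls(\sigma))},(\cdot)^2L/2,\fixed,s(\sigma)\right)\le \frac{s(\sigma)}{L}\,\tr((\fixed^\top\fixed)^{-1}).
$$
On the other hand, because the parameter is constrained to $\mathbb{B}_2^\dimension(1)$, the trivial estimator $\hat\theta\equiv 0$ has worst-case risk $\sup_{\theta\in\mathbb{B}_2^\dimension(1)}\|\theta\|_2^2\le 1$, so the same minimax risk is also at most $1$. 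Combining the two bounds yields
$$
R^*\!\left(e^{-(\cdot)^2/(2Ls(\sigma))},(\cdot)^2L/2,\fixed,s(\sigma)\right)\le \min\left(\frac{s(\sigma)}{L}\,\tr((\fixed^\top\fixed)^{-1}),\,1\right).
$$

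Chaining the two displays gives $R^*(h,\Phi,M,s(\sigma)) \gtrsim R^*(e^{-(\cdot)^2/(2Ls(\sigma))},(\cdot)^2L/2,\fixed,s(\sigma))$, which is the claim. I do not anticipate a genuine obstacle here: the corollary is essentially a bookkeeping consequence of Theorem~\ref{thm:minimaxBound} together with the exact MLE risk \eqref{eq:linearregression}. The only point requiring care is the interplay with the constraint $\theta\in\mathbb{B}_2^\dimension(1)$, specifically the $\min(\cdot,1)$ truncation: the (unconstrained) MLE controls the risk only in the regime $\frac{s(\sigma)}{L}\tr((\fixed^\top\fixed)^{-1})\le 1$, and in the complementary regime one must fall back on the zero estimator to recover the factor $1$. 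One should also confirm that the universal constant hidden in $\gtrsim$ of Theorem~\ref{thm:minimaxBound} is absolute, i.e.\ independent of $h,\Phi,\fixed,s(\sigma)$, so that the comparison holds uniformly over the entire class.
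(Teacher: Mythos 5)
Your proposal is correct and follows essentially the same route as the paper, which establishes the corollary by comparing the lower bound of Theorem~\ref{thm:minimaxBound} (valid uniformly over the class since the Gaussian model has $\Phi''=L$) against the exact MLE risk $\frac{s(\sigma)}{L}\tr\left((\fixed^\top\fixed)^{-1}\right)$ computed in \eqref{eq:linearregression}. Your explicit treatment of the $\min(\cdot,1)$ truncation via the zero estimator $\hat\theta\equiv 0$ is a detail the paper leaves implicit, and it correctly covers both the regime where the trace term exceeds $1$ and the rank-deficient case where the trace is taken to be $+\infty$.
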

Roughly speaking, the above asserts that linear models are most favorable among a broad class of GLMs, giving this paper its name.

\subsection{Related Work}

Perhaps  most closely related to our  work is that of Abramovich and Grinshtein \cite{abramovich2016model}, albeit for a slightly different setup. In particular, Abramovich and Grinshtein  provide minimax lower bounds for the Kullback-Leibler divergence between the vector $M\theta$ and any estimator $\widehat{\fixed \theta}$ under a $k$-sparse setting $\|\theta\|_0 \leq k $,  with the parameter $\theta$ constrained to have at most $k$ non-zero entries. 
When the cumulant function $\Phi$ is strongly convex with $0 < R\leq \Phi'' \leq L$ for some fixed constants $\UU,L$, we can adapt the arguments of \cite{abramovich2016model} to obtain the following $L_2$ minimax lower bound
\begin{align*} 
        \inf_{\widehat{\fixed \theta}} \sup_{\theta\in \mathbb{B}^\dimension_2(1)} \|M\theta  - \widehat{\fixed \theta}\|_2^2 \gtrsim   \frac{ d s(\sigma)  \UU}{\LL^2}  \cdot  \frac{\lambda_{\min}(\fixed^\top \fixed)}{\lambda_{\max}(\fixed^\top \fixed)} ,
\end{align*}
where $M$ is assumed to be full rank and $\lambda_{\min}$ and $\lambda_{\max}$ denote smallest and largest eigenvalues, respectively.    The minimax lower bound for estimating $\fixed\theta$ is not directly comparable to our result, where the goal is estimation of  $\theta$. Nevertheless, using the operator norm inequality $\|\fixed(\theta - \hat{\theta})\|_2^2 \leq \lambda_{\max} (\fixed^\top \fixed) \|\theta - \hat{\theta}\|_2^2$, we may conclude
\begin{align*}
        \inf_{\hat{\theta}} \sup_{\theta\in \mathbb{B}^\dimension_2(1)} \|\theta  - \hat{\theta }\|_2^2 \gtrsim \frac{ d s(\sigma) R}{L^2} \cdot   \frac{\lambda_{\min}(\fixed^\top \fixed)}{\lambda_{\max}^2(\fixed^\top \fixed)} .
\end{align*}
A direct computation shows that  \eqref{eq:minimaxBound} is sharper than the above $L_2$ minimax estimate since
\begin{equation*}
    \begin{split}
    \frac{d \, \lambda_{\min}(\fixed^\top \fixed)}{\lambda_{\max}^2(\fixed^\top \fixed)}  &\leq \frac{\dimension}{\lambda_{\max}(\fixed^\top \fixed)} 
    \leq \tr\left(\left(\fixed^\top \fixed \right)^{-1}\right).
    \end{split}
\end{equation*}

As for a general theory, apart from the gaussian linear model, the minimax estimator for the GLM does not have a closed form, but the Maximum Likelihood Estimator (MLE) can be  approximated by  iterative weighted linear regression \cite{nelder1972generalized}. A variety of estimators such as aggregate estimators \cite{rigollet2012kullback}, robust estimators \cite{cantoni2001robust} and GLM with Lasso \cite{van2008high} have been proposed to solve different settings of the GLM. We refer interested readers to \cite{mccullagh2019generalized} for the  theory of  GLMs. 

Another line of related work explores models with stochastic design matrix $\fixed$. Duchi, Jordan and Wainwright \cite{duchi2018minimax} consider  inference of a parameter $\theta$ under privacy constraints. Negahban et al. \cite{negahban2012unified} and Loh et al. \cite{loh2015regularized} provide consistency and convergence rates for M-estimators in GLMs with low-dimensional structure under high-dimensional scaling. 

Separate from the minimax problems considered here, model selection is another line of popular work. Model selection in linear regression dates back to the seventies and has regained popularity over the past decade, due to the increase in need of data exploration for high dimensional data; see \cite{akaike1998information, birge2007minimal, verzelen2012minimax} and many other works for the history. More recently, tools in model selection for linear regression have been adapted for the GLM; see \cite{abramovich2016model} for a brief discussion. 




\subsection{Organization}
The remainder of this paper is organized as follows. Preliminaries for the derivation of our minimax lower bounds are introduced in Section \ref{subsec:tools}. The proof of Theorem \ref{thm:minimaxBound} is given in Section \ref{subsec:minimax}, with further remarks in Section \ref{subsec:Rmks}. 

\section{Derivation of Minimax Bound for the GLM}
\label{sec:glm}

The following notation is used throughout:   upper-case letters (e.g., $X$, $Y$) denote random variables or matrices, and lower-case letters (e.g., $x, y$) denote realizations of random variables  or vectors. We use  subscript notation $v_i$ to denote the $i$-th component of a vector $v = (v_1, v_2, \ldots, v_d)$, and we define the leave-one-out vector $v^{(j)}:= (v_1,\ldots, v_{j-1}, v_{j+1},\ldots, v_d)$.

\subsection{Preliminaries} \label{subsec:tools}
In the general framework of parametric statistics, let $(\mathcal{X}, \mathcal{F}, P_{\theta}; \theta \in \mathbb{R}^d)$ be a dominated family of probability  measures on a measurable space $(\mathcal{X}, \mathcal{F})$ with dominating $\sigma$-finite measure $\lambda$. To each $P_{\theta}$, we associate a density $f(\cdot;\theta)$ with respect to $\lambda$  according to 
\begin{equation}
    dP_{\theta}(x) = f(x;\theta) d\lambda(x).
\end{equation}
Assuming the maps $\theta \longmapsto f(x;\theta)$, $x\in \mathcal{X}$, are differentiable, the Fisher information matrix associated to observation $X \sim f(\cdot;\theta)$ and parameter $\theta \in \mathbb{R}^\dimension$ is defined as the matrix-valued map $\theta \longmapsto \mathcal{I}_X(\theta)$ with components
\begin{equation*}
    [\mathcal{I}_X(\theta)]_{ij} = \E \left[ \frac{\partial \log f(X;\theta)}{\partial \theta_i}    \frac{\partial \log f(X;\theta)}{\partial \theta_j}  \right], ~~~\theta \in \R^d.
\end{equation*}
Here and throughout, $\log$ denotes the natural logarithm.
The following regularity assumption is standard when dealing with Fisher information. 
\begin{assumption} \label{assumption:assumption}
The densities $f(\cdot;\theta)$ are sufficiently regular to permit the following exchange of integration and differentiation: 
    \begin{align} 
            \int_\mathcal{X} \nabla_\theta f(x;\theta) d\lambda(x) = 0;  ~~~\theta \in \R^d. \label{eq:assumedRegularity}
    \end{align}
    Here,  $\nabla_\theta$ denotes the gradient with respect to $\theta$.
\end{assumption}

While the Fisher information is one notion of   \emph{information} that an observation $X\sim f(\cdot; \theta)$ reveals about the unknown parameter $\theta$, it also makes sense to consider the usual mutual information $I(X;\theta)$ under the further assumption that $\theta$ is distributed according to a known prior distribution $\pi$ (a probability measure on $\R^d$).  Recent results by the authors together with Aras and Pananjady  establish a quantitative relation between these two notions of information \cite{alpc}.  To state the result precisely, recall that a probability measure $d\mu = e^{-V}dx$ on $\R^d$ is said to be log-concave if the potential $V: \R^d\to \R$ is a convex function.  
\begin{lemma}[\!\!{\cite[Theorem 2]{alpc}}] \label{lem:alpc}
Let $\theta \sim \pi$, where $\pi$ is log-concave on $\R^d$, and given $\theta$ let $X\sim f(\cdot; \theta)$.  If     Assumption \ref{assumption:assumption} holds, then 
    \begin{align}
                I(X;\theta) \leq \dimension\cdot \phi\left(
                \frac{\tr\left(\Cov(\theta)\right)\cdot \tr\left(\E \, \mathcal{I}_X(\theta) \right)}{d^2} \right), \label{eq:alpc-2}
\end{align}
where
\begin{align*}
        \phi(x) := 
        \begin{cases}
        \sqrt{x} & \mbox{if $0\leq x \leq 1$}\\
        1 + \frac{1}{2}\log x & \mbox{if $x\geq 1$.}
        \end{cases}
\end{align*}
\end{lemma}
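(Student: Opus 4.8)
The plan is to bound the mutual information through the entropy decomposition $I(X;\theta)=h(\theta)-h(\theta\mid X)$, and to read the two branches of $\phi$ off two regimes of the ``information ratio'' $x:=\tr(\Cov(\theta))\cdot\tr(\E\,\mathcal{I}_X(\theta))/\dimension^2$. The prior entropy is easy to control from above: since $\theta$ has covariance $\Cov(\theta)$, the Gaussian maximum-entropy inequality together with $\det\Cov(\theta)\le(\tr\Cov(\theta)/\dimension)^{\dimension}$ gives $h(\theta)\le\tfrac{\dimension}{2}\log\!\big(2\pi e\,\tr\Cov(\theta)/\dimension\big)$. Everything then reduces to a sufficiently strong \emph{lower} bound on the posterior entropy $h(\theta\mid X)=\E_X[h(\theta\mid X=x)]$, which is where both the Fisher information of the channel and the log-concavity hypothesis must enter.

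For the lower bound on $h(\theta\mid X)$ I would apply an entropy--Fisher-information (Stam) inequality to the posterior law, reducing matters to its Fisher information. The expected posterior Fisher information obeys the Bayesian (van Trees--type) identity $\E_X\big[\tr\mathcal{I}(\theta\mid X)\big]=\tr\big(\E\,\mathcal{I}_X(\theta)\big)+\tr\mathcal{I}(\pi)$, where $\mathcal{I}(\pi)$ is the location Fisher information of the prior; crucially the cross term drops out because Assumption~\ref{assumption:assumption} forces $\E_{X\mid\theta}\nabla_\theta\log f(X;\theta)=\int_{\mathcal X}\nabla_\theta f(x;\theta)\,d\lambda(x)=0$. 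Feeding this into the Stam bound and simplifying with Jensen's inequality (concavity of $\log$) produces a bound of the shape $\tfrac{\dimension}{2}\log\!\big(\tr\Cov(\theta)\,(\tr\E\,\mathcal{I}_X(\theta)+\tr\mathcal{I}(\pi))/\dimension^{2}\big)$, which already exhibits the logarithmic growth of the high-information branch $\dimension(1+\tfrac12\log x)$ and is in fact tight for a Gaussian prior through a Gaussian channel.

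The main obstacle is the stray prior term $\tr\mathcal{I}(\pi)$: for a strongly concentrated (but still log-concave) prior it can be arbitrarily large, so the naive Stam estimate alone does not give the stated bound. The point of log-concavity is that the same concentration which inflates $\mathcal{I}(\pi)$ simultaneously depresses $h(\theta)$, so that the two effects must cancel in $I(X;\theta)$. Making this precise is the heart of the proof: I would replace the crude Stam step by a reverse entropy-power / localization estimate for log-concave measures, or equivalently compare $X$ against an auxiliary additive-Gaussian channel of adjustable scale $t$ carrying the same average Fisher information. Optimizing $t$ is exactly what converts the two competing linear terms into their geometric mean $\sqrt{\tr\Cov(\theta)\,\tr\E\,\mathcal{I}_X(\theta)}=\dimension\sqrt{x}$, yielding the low-information branch, while the large-$t$ behavior recovers the logarithmic branch; the envelope of the two is $\dimension\,\phi(x)$.

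Finally, the per-coordinate form $\dimension\,\phi(x)$ with a trace-average inside $\phi$ suggests an alternative, and perhaps cleaner, route to the same conclusion: prove the scalar estimate $I\le\phi(v\,j)$ relating a one-dimensional log-concave parameter of variance $v$ to its channel Fisher information $j$, and then lift to $\R^{\dimension}$ via the chain rule $I(X;\theta)=\sum_k I\big(X;\theta_k\mid\theta_{1:k-1}\big)$, using that marginals and conditionals of a log-concave $\pi$ remain log-concave, that the conditional variances sum to at most $\tr\Cov(\theta)$, and that concavity of $\phi$ lets Jensen's inequality collapse the sum into a single evaluation at the average. Either way, the decisive step is the log-concave control of the posterior entropy that absorbs $\tr\mathcal{I}(\pi)$; the regularity Assumption~\ref{assumption:assumption} is needed only to kill the cross term and to legitimize the differentiations throughout.
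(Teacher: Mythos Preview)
The paper does not prove this lemma. It is quoted verbatim as \cite[Theorem~2]{alpc} (Aras, Lee, Pananjady, Courtade, 2019) and invoked as a black box; the only commentary the paper offers is the paragraph after the statement, which notes the relation to the van Trees inequality and to Efroimovich's entropic refinement, and stresses that the point of log-concavity is precisely to avoid any dependence on the prior Fisher information $\mathcal{J}(\pi)$. So there is no in-paper argument to compare your proposal against.

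That said, your sketch is aimed at the right target and correctly isolates the crux. The decomposition $I(X;\theta)=h(\theta)-h(\theta\mid X)$, the Gaussian/AM--GM control of $h(\theta)$, and the Stam-type lower bound on $h(\theta\mid X)$ leading to the van~Trees identity $\E\tr\mathcal{I}(\theta\mid X)=\tr\E\,\mathcal{I}_X(\theta)+\tr\mathcal{I}(\pi)$ are exactly the ingredients behind the Efroimovich-type bound the paper alludes to, and you are right that Assumption~\ref{assumption:assumption} is what kills the cross term. You have also put your finger on the genuine difficulty: for the log-concave priors actually used later in the paper (uniforms on rectangles), $\tr\mathcal{I}(\pi)=+\infty$, so the naive Stam route collapses and one must exploit log-concavity to trade off the blow-up in $\mathcal{I}(\pi)$ against the corresponding drop in $h(\theta)$. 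Your two suggested mechanisms --- a reverse entropy-power/localization estimate for log-concave laws, or comparison with an auxiliary Gaussian channel of optimized scale --- are plausible, but as written they remain heuristic; turning either into a clean inequality that produces exactly $d\,\phi(x)$ with the stated $\phi$ is nontrivial and is the substance of the cited paper. If you want to see the actual argument, consult \cite{alpc}; within the present paper there is nothing further to check.
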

As discussed extensively in \cite{alpc}, the above result is  related to the van Trees inequality \cite{bcrb,van2004detection}, and its entropic improvement due to Efroimovich \cite{efro}.  The crucial feature of \eqref{eq:alpc-2} compared to these other results is that it does not depend on the (information theorist's version of) Fisher information of the prior $\pi$, commonly denoted $\mathcal{J}(\pi)$.  This is what is gained via the assumption of log-concavity, and is important for our analysis where we introduce (log-concave) priors with arbitrarily large Fisher information.

\subsection{Proof of Theorem \ref{thm:minimaxBound}} \label{subsec:minimax}



Recall that the design matrix $M$ has as its rows $\{m_i\}_{i=1}^n \subset \mathbb{R}^d$.  Writing the matrix $\fixed$ in terms of its SVD $\fixed = U\Sigma V^\top$ and defining $u_i$ as the $i$-th column of the matrix $U^{\top}$, we have
\begin{align}
    \langle \fixedvec, \theta \rangle = \langle \underbrace{ \Sigma u_i}_\text{$\Bar{m}_i$} ,  \underbrace{V^\top \theta}_\text{$\Bar{\theta}$}\rangle  = \langle \Bar{m}_i, \Bar{\theta} \rangle,  \label{eq:rotation}
\end{align}
where we defined the variables $\Bar{m}_i:=  \Sigma u_i$ and $\Bar{\theta} := V^\top \theta$. Since $V$ is an orthogonal matrix by definition, it follows by rotation invariance of the $L_2$ ball $\mathbb{B}_2^d(1)$ that the estimation problem can be equivalently formulated under the reparametrization $(\theta, M) \longrightarrow (\bar\theta, \bar M)$, where  $\bar M := MV = U\Sigma$.  More specifically, the minimax risk for $\theta$ over the set of estimators for estimating $\theta\in \mathbb{B}_2^d(1)$  is equal to the minimax risk for estimating $\Bar{\theta} \in \mathbb{B}_2^d(1)$. More precisely,  
\begin{align*}
    \inf_{\hat{\theta}} \sup_{\theta \in \mathbb{B}_2^\dimension(1)} \E\|\theta - \hat{\theta}\|_2^2 = \inf_{ \hat{\Bar{\theta}}} \sup_{\Bar{\theta} \in \mathbb{B}_2^\dimension(1)} \E\|\Bar{\theta} - \hat{\Bar{\theta}}\|_2^2. 
\end{align*}
As a result, we may assume without loss of generality that $\fixed^\top \fixed$ is a diagonal matrix. 

By definition, minimax risk is lower bounded by the Bayes risk, when $\theta$ is assumed to be distributed according to a prior $\pi$, defined on the \ltwo ball $\mathbb{B}_2^d(1)$. Hence, our task is to judiciously select a  prior $\pi$ that yields the desired lower bound.  Toward this end, we will let $\pi$ be the uniform measure on the rectangle $\prod_{i=1}^d [-\epsilon_i/2, \epsilon_i/2]$ for values $(\epsilon_i)_{i = 1, 2, \ldots, d}$ to be determined below satisfying 
\begin{align}
\sum_{i=1}^d \epsilon_i^2\leq 4.  \label{eq:epsilon-requirement}
\end{align}
In other words, our construction implies $\theta$ has independent components, with the $i$-th coordinate $\theta_i$ uniform on the interval $[-\epsilon_i/2, \epsilon_i/2]$.  The interval lengths will, in general,  be chosen to exploit the structure of the design matrix $M$.

%

We now describe our construction of the sequence $(\epsilon_i)_{i=1,2, \ldots, d}$. We start with the simple case, in which the matrix $M$  does not have full (column) rank.  In this case,  there exists an eigenvalue  $\lambda_k(\fixed^\top \fixed) = 0$.  For this index $k$, we set $\epsilon_i = 2\delta_{ik}$, $i=1,2, \ldots, d$, where $\delta_{ij}$ is the Kronecker delta function.   Now, we may bound
\begin{align}
        \E \|\theta - \hat{\theta}\|_2^2 &\geq \Var(\theta_k - \hat{\theta}_k) \notag \\ 
        &\stackrel{(a)}{\geq} \frac{1}{2\pi e}   e^{2h(\theta_k - \hat{\theta}_k)}  \stackrel{(b)}{\geq} \frac{1}{2\pi e}   e^{2h(\theta_k | \hat{\theta}_k )} \notag\\
        & = \frac{1}{2\pi e}   e^{2h(\theta_k) - 2I(\hat{\theta}_k;\theta_k)} \stackrel{(c)}{\geq} \frac{1}{2\pi e}   e^{2h(\theta_k) - 2I(X;\theta_k)} \notag \\
        &\stackrel{(d)}{=} \frac{2}{\pi e} \notag
\end{align}
where (a) follows from the max-entropy property of gaussians; (b) follows since conditioning reduces entropy: $h(\theta_k - \hat{\theta}_k) \geq h(\theta_k - \hat{\theta}_k|\hat{\theta}_k) = h(\theta_k|\hat{\theta}_k)$;  (c) follows from the data processing inequality since $\theta_k\to X \to \hat{\theta}_k$ forms a Markov chain; and (d) follows since $\theta_k\sim \operatorname{Unif}(-1,1)$ and $I(X;\theta_k)=0$, since $\pi$ is supported in the kernel of $M$ by construction.

Having shown the minimax risk is lower bounded by a constant when $M$ does not have full (column) rank, we assume henceforth that $\fixed$ has full   rank.  

Note that under our assumptions, the pair $(X,\theta)$ has a joint distribution, and therefore so does the pair $(X,\theta_i)$.  Consistent with the previously introduced notation, we write $\mathcal{I}_X(\theta_i)$ to denote the Fisher information of $X$ drawn according to the conditional law of $X$ given $\theta_i$.  With this notation in hand, the next lemma provides a comparison  between the expected Fisher information conditioned on a single component $\theta_i$ of the parameter $\theta$ and the $i$-th diagonal entry of the expected Fisher information matrix conditioned for parameter $\theta$.
\begin{lemma}\label{lem:independent}
    When the components of parameter $\theta \sim \pi$, $\theta \in \R^d$ are independent and $X\sim f(\cdot;\theta)$ is generated by the GLM \eqref{eq:glm}, we have
    \begin{align*}
            \E  \left[\mathcal{I}_X(\theta)\right]_{ii} \geq \E \, \mathcal{I}_X(\theta_i) ~~~i = 1,2,\ldots, d.
    \end{align*}
\end{lemma}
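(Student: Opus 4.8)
The plan is to recognize both quantities as second moments of score functions and then deduce the inequality from a single application of the conditional Jensen inequality. Fix the coordinate $i$ from the statement and write $S_i(X,\theta) := \partial_{\theta_i}\log f(X;\theta)$ for the score of the full model. By the standard identity relating Fisher information to the variance of the score, $[\mathcal{I}_X(\theta)]_{ii} = \E_{X\mid\theta}[S_i(X,\theta)^2]$, so taking a further expectation over $\theta\sim\pi$ and using the tower property gives $\E[\mathcal{I}_X(\theta)]_{ii} = \E[S_i(X,\theta)^2]$, the expectation now being over the full joint law of $(X,\theta)$. Differentiating the GLM log-likelihood yields the closed form $S_i(X,\theta) = s(\sigma)^{-1}\sum_{\ell=1}^{\observations} m_{\ell,i}\,(X_\ell - \Phi'(\langle m_\ell,\theta\rangle))$, where $\ell$ indexes the observations and $m_{\ell,i}$ is the $i$-th entry of the row $m_\ell$; this confirms the score is well defined under our assumptions (and, incidentally, gives the closed form $[\mathcal{I}_X(\theta)]_{ii} = s(\sigma)^{-1}\sum_{\ell} m_{\ell,i}^2\,\Phi''(\langle m_\ell,\theta\rangle)$).

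For the right-hand side, let $g(\cdot\,;\theta_i)$ denote the conditional density of $X$ given the single coordinate $\theta_i$. Because the coordinates of $\theta$ are independent, this is the mixture $g(x;\theta_i) = \int f(x;\theta)\,d\pi^{(i)}(\theta^{(i)})$, where $\theta_i$ is held fixed, $\theta^{(i)}$ is the leave-one-out vector, and $\pi^{(i)} = \prod_{j\neq i}\pi_j$ is its (fixed, $\theta_i$-independent) marginal law. Writing $\widetilde S_i(X,\theta_i) := \partial_{\theta_i}\log g(X;\theta_i)$ for the marginal score, the same score-variance identity gives $\mathcal{I}_X(\theta_i) = \E_{X\mid\theta_i}[\widetilde S_i^2]$, and since $\widetilde S_i$ does not depend on $\theta^{(i)}$ while $(X,\theta_i)$ is a marginal of $(X,\theta)$, the tower property yields $\E\,\mathcal{I}_X(\theta_i) = \E[\widetilde S_i(X,\theta_i)^2]$ under the same joint law.

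The crux is the identity $\widetilde S_i(x,\theta_i) = \E[S_i(X,\theta)\mid X = x,\theta_i]$. To establish it I would differentiate $g$ under the integral sign; here the independence of the coordinates is essential, since it makes the mixing measure $\pi^{(i)}$ independent of $\theta_i$ and removes any contribution from differentiating it, leaving $\partial_{\theta_i} g(x;\theta_i) = \int f(x;\theta)\,S_i(x,\theta)\,d\pi^{(i)}(\theta^{(i)})$. Dividing by $g(x;\theta_i)$ and noting that, by Bayes' rule, the posterior law of $\theta^{(i)}$ given $(X=x,\theta_i)$ has density $f(x;\theta)/g(x;\theta_i)$ against $\pi^{(i)}$ identifies the resulting ratio as precisely the stated conditional expectation.

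With the identity in place the lemma is immediate: applying the conditional Jensen inequality $(\E[\,S_i \mid X,\theta_i\,])^2 \le \E[\,S_i^2\mid X,\theta_i\,]$ and taking expectations gives $\E[\widetilde S_i^2] = \E[(\E[S_i\mid X,\theta_i])^2]\le \E[S_i^2]$, which rearranges to $\E\,\mathcal{I}_X(\theta_i)\le \E[\mathcal{I}_X(\theta)]_{ii}$, as desired. I expect the one genuine technical point to be the rigorous justification of the interchange of differentiation and integration defining $\widetilde S_i$ (and of the score-variance identities themselves); this is exactly what the regularity in Assumption \ref{assumption:assumption} is for, and I would discharge the required dominated-convergence hypotheses using the bound $\Phi''\le L$ of Assumption \ref{assumption:CumulantFn} together with the compact support of $\pi$.
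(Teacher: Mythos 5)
Your proof is correct and is essentially the paper's own argument in probabilistic dress: your key identity $\widetilde S_i(x,\theta_i)=\E\left[S_i(X,\theta)\mid X=x,\theta_i\right]$ is exactly the paper's step (b) (differentiation under the integral, with independence making $\pi^{(i)}$ a fixed, $\theta_i$-free mixing measure), and your conditional Jensen inequality with respect to the posterior density $f(x;\theta)/g(x;\theta_i)$ against $\pi^{(i)}$ is precisely the paper's Cauchy--Schwarz step (a) after dividing through by $g^2$. Even your closing justification (regularity of $\Phi$, compactness, Leibniz rule) mirrors the paper's.
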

\begin{proof}
The desired estimate is obtained by observing
\begin{align*} 
        \E [\mathcal{I}_X(\theta)]_{ii} &= \E \left[ \frac{\left(\frac{\partial}{\partial \theta_i} f(X;\theta)\right)^2}{f(X;\theta)^2}  \right]\\
        &\stackrel{(a)}{\geq} \E \left[ \frac{\left(\E\left[\left. \frac{\partial}{\partial \theta_i} f(X;\theta) \right|\theta_i, X\right]\right)^2}{ \left(\E\left[\left. f(X;\theta) \right|\theta_i, X\right]\right)^2} \right] \\
        &\stackrel{(b)}{=} \E \left[ \frac{\left(\frac{\partial}{\partial \theta_i}\E\left[\left.  f(X;\theta) \right|\theta_i, X\right]\right)^2}{ \left(\E\left[\left. f(X;\theta) \right|\theta_i, X\right]\right)^2}\right]= \E \, \mathcal{I}_X(\theta_i).
\end{align*}
In the above, (a) is due to Cauchy-Schwarz.  Indeed, let $\pi_i$ and $\pi^{(i)}$ denote the marginal laws of $\theta_i$ and $\theta^{(i)}$, respectively.  Using independence of $\theta_i$ and $\theta^{(i)}$, note that 
\begin{align*}
&\E \left[   \frac{\left(\frac{\partial}{\partial \theta_i} f(X;\theta)\right)^2}{f(X;\theta)^2}   \right] \\
&=\int_{\mathbb{R}}\int_{\mathcal{X}} \int_{\mathbb{R}^{d-1}}  \frac{\left(\frac{\partial}{\partial \theta_i} f(x;\theta)\right)^2}{f(x;\theta)}    d\pi^{(i)}(\theta^{(i)}) d\lambda(x) d\pi_i(\theta_i)\\
&\geq \int_{\mathbb{R}}\int_{\mathcal{X}}   \frac{ \left( \int_{\mathbb{R}^{d-1}} \frac{\partial}{\partial \theta_i} f(x;\theta) d\pi^{(i)}(\theta^{(i)}) \right)^2 }{ \int_{\mathbb{R}^{d-1}}  f(x;\theta) d\pi^{(i)}(\theta^{(i)})   }    d\lambda(x) d\pi_i(\theta_i)\\
&=\E \left[ \frac{\left(\E\left[\left. \frac{\partial}{\partial \theta_i} f(X;\theta) \right|\theta_i, X\right]\right)^2}{ \left(\E\left[\left. f(X;\theta) \right|\theta_i, X\right]\right)^2}\right],
\end{align*}
where the last line follows since  
$$x \longmapsto \E\left[\left.  f(X;\theta) \right|\theta_i, X=x\right] = \int_{\mathbb{R}^{d-1}}  f(x;\theta) d\pi^{(i)}(\theta^{(i)})$$
 is the density (w.r.t. $\lambda$) of $X$ given $\theta_i$. 
 
Equality (b) follows from independence between $\theta_i$ and $\theta^{(i)}$ and the Leibniz integral rule.  Application of the latter can be justified by the assumed regularity of $\Phi$ and compactness of $\mathbb{B}_2^d(1)$.
\end{proof}

Next, fix $\epsilon_i>0$.  Since $\theta_i \sim \operatorname{Unif}(-\epsilon_i/2, \epsilon_i/2)$ has log-concave distribution, and the GLM \eqref{eq:glm} satisfies Assumption \ref{assumption:assumption} (a consequence of Assumption \ref{assumption:CumulantFn} and the Leibniz integral rule, justified by regularity of $\Phi$), we can apply Lemmas \ref{lem:alpc} and  \ref{lem:independent}  to conclude 
\begin{align} 
        e^{2h(\theta_i|\hat{\theta}_i)} 
        &\geq e^{2h(\theta_i) - 2I(X;\theta_i)} \notag \\
        &\geq e^{2h(\theta_i) - 2\phi\left(\Var(\theta_i) \cdot \E \, \mathcal{I}_X(\theta_i)\right)} \notag \\ 
        &\geq \epsilon_i^2 e^{-2\phi\left( \frac{\epsilon_i^2}{12}   \E \, \left[\mathcal{I}_X(\theta))\right]_{ii}\right)}.\label{eq:single-component}
\end{align}
Note that the last inequality used the identities $\Var(\theta_i) = \frac{\epsilon_i^2}{12}$ and $h(\theta_i) = \log(\epsilon_i)$, holding by construction. 

Next, recall the following well-known identities associated with exponential families of the form we consider. 

\begin{lemma}[\!\!{\cite[Page 29]{mccullagh2019generalized}}] \label{lem:GLM-lem}
    Fix $m$ and $\theta$, and consider a density  $f(x;\theta) = h(x)\exp\left(\frac{x \langle m, \theta \rangle - \Phi(\langle m, \theta \rangle )}{s(\sigma)}\right)$ with respect to $\lambda$.  A random observation $X\sim f(\cdot;\theta)$ has mean $\Phi'(\langle m,\theta \rangle)$ and variance $ s(\sigma) \cdot \Phi''(\langle m, \theta \rangle)$. 
\end{lemma}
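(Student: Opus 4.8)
The plan is to exploit the normalization of the density together with differentiation under the integral sign with respect to the natural parameter. Throughout, I abbreviate $s := s(\sigma)$ and set $\eta := \langle m, \theta\rangle$, treating $\eta$ as a scalar variable, since $m$ is fixed and the density depends on $\theta$ only through $\eta$. The observation driving everything is that $\Phi$ is precisely the cumulant generating function of this exponential family, so its first two derivatives must return the first two cumulants.

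First I would rewrite the normalization condition $\int_{\mathcal{X}} f(x;\theta)\, d\lambda(x) = 1$ in the equivalent form
$$
\exp\left(\frac{\Phi(\eta)}{s}\right) = \int_{\mathcal{X}} h(x) \exp\left(\frac{x\eta}{s}\right) d\lambda(x),
$$
which identifies $\Phi(\eta)/s$ as the logarithm of the (scaled) moment generating function of the base measure. Differentiating the logarithm of both sides once with respect to $\eta$ and interchanging differentiation and integration gives
$$
\frac{\Phi'(\eta)}{s} = \frac{1}{s}\cdot \frac{\int_{\mathcal{X}} x\, h(x) \exp(x\eta/s)\, d\lambda(x)}{\int_{\mathcal{X}} h(x) \exp(x\eta/s)\, d\lambda(x)} = \frac{1}{s}\,\E[X],
$$
and cancelling the factor $1/s$ yields $\E[X] = \Phi'(\eta) = \Phi'(\langle m,\theta\rangle)$, as claimed.

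For the variance, I would differentiate a second time. The second derivative of the log-moment-generating function is the second cumulant, equal to $\Var(X)/s^2$; concretely, $\tfrac{d^2}{d\eta^2}\log\!\big(\!\int h(x)e^{x\eta/s}d\lambda(x)\big) = \tfrac{1}{s^2}\big(\E[X^2]-(\E[X])^2\big)$. The left-hand side is $\Phi''(\eta)/s$, so equating the two and multiplying through by $s^2$ gives $\Var(X) = s\,\Phi''(\eta) = s(\sigma)\cdot\Phi''(\langle m,\theta\rangle)$, completing the claim.

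The only genuine technical point is justifying the interchange of differentiation and integration at each step. This is exactly the regularity granted by Assumption \ref{assumption:assumption}, together with the twice-differentiability of $\Phi$ from Assumption \ref{assumption:CumulantFn}, which ensures the integrand and its first two $\eta$-derivatives are dominated. I expect this to be the main---though entirely routine---obstacle; once the interchange is licensed, the remainder is a mechanical computation of the first two cumulants of a one-parameter exponential family.
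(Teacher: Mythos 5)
Your proof is correct. The paper does not prove Lemma \ref{lem:GLM-lem} at all --- it is quoted directly from McCullagh and Nelder \cite[p.~29]{mccullagh2019generalized} --- and your derivation is precisely the standard argument behind that citation: identify $\exp(\Phi(\eta)/s)$ as the normalizing integral, take logarithms, and differentiate twice in $\eta$ to read off the first two cumulants, giving $\E[X]=\Phi'(\eta)$ and $\Var(X)=s\,\Phi''(\eta)$. One small refinement on your final paragraph: the interchange of differentiation and integration here does not need Assumption \ref{assumption:assumption} (which asserts a related but different identity, $\int \nabla_\theta f\,d\lambda = 0$, for the multivariate model) nor Assumption \ref{assumption:CumulantFn}; for a one-parameter exponential family the map $\eta \mapsto \int_{\mathcal{X}} h(x)e^{x\eta/s}\,d\lambda(x)$ is automatically analytic on the interior of the set where it is finite, so differentiation under the integral sign is licensed for free, and in fact this is what guarantees $\Phi$ is smooth there in the first place. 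Apart from that slight misattribution of where the regularity comes from, the argument is complete and is the same route the cited reference takes.
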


Combining the above with our assumption that $\Phi''\leq L$, we have for any $\theta\in \mathbb{R}^d$,
\begin{align} 
        [\mathcal{I}_X(\theta)]_{ii} 
        &= \E_{X\sim f(\cdot; \theta)} \left(\frac{\partial}{\partial \theta_i} \log f(X;\theta)\right)^2  \notag \\
        &= \frac{1}{s^2(\sigma)} \E_{X\sim f(\cdot; \theta)} \left(\sum_{j=1}^\observations M_{ji}\left(X_j - \Phi'(\langle m_j, \theta \rangle)\right)\right)^2 \notag \\
        &= \frac{1}{s^2(\sigma)}  \sum_{j=1}^\observations \left(M_{ji}^2 \,  \Var(X_j)\right) \notag \\
        &\leq  \frac{1}{s(\sigma)}  \sum_{j=1}^\observations \left(M_{ji}^2 \,  L\right) \notag\\
        &= \frac{\LL}{s(\sigma)} [\fixed^\top \fixed]_{ii}. \label{eq:fisher-information-value}
\end{align}

 
Putting \eqref{eq:single-component} and \eqref{eq:fisher-information-value} together, we conclude for any choice of $\epsilon_i>0$,
\begin{align}
        e^{2h(\theta_i|\hat{\theta}_i)} \geq \epsilon_i^{2} \exp\left[-2\phi\left(\frac{\epsilon_i^2}{12}  \frac{\LL}{s(\sigma)}[\fixed^\top \fixed]_{ii}\right)\right]. \label{eq:single}
\end{align}
In case $\epsilon_i=0$, we have the trivial equality $e^{2h(\theta_i|\hat{\theta}_i)}=0$, which is consistent with the RHS of \eqref{eq:single} evaluated at $\epsilon_i=0$.  Hence, the estimate \eqref{eq:single} holds for all $\epsilon_i\geq 0$.  

Summing \eqref{eq:single} from $i=1,2, \ldots, d$,  for parameter $\theta \sim \pi = \prod_{i=1}^d \operatorname{Unif}(-\epsilon_i/2, \epsilon_i/2)$ and any measurable function $\hat\theta$ of $X\sim f(\cdot; \theta)$, we have the following lower bound on the Bayesian  $L_2$ risk,
\begin{align}  
            \E \|\theta - \hat{\theta}\|_2^2 &\geq  \sum_{i=1}^d \Var(\theta_i - \hat\theta_i) \notag\\
            &\geq \frac{1}{2 \pi e}\sum_{i=1}^\dimension  e^{2h(\theta_i|\hat{\theta}_i)} \notag \\
            &\geq \frac{1}{2 \pi e} \sum_{i=1}^\dimension  \epsilon_i^{2} \exp\left[-2\phi\left(\frac{\epsilon_i^2}{12}  \frac{\LL}{s(\sigma)}[\fixed^\top \fixed]_{ii}\right)\right].  \label{eq:glm-ineqxx}
\end{align}

It remains to choose an appropriate sequence $(\epsilon_i)_{i=1,2,\ldots, d}$ to obtain the desired lower bound.  Toward this end, we consider two cases:

\smallskip
\noindent{\bf Case 1:} $\tr((\fixed^\top \fixed)^{-1}) \leq \frac{1}{3}   \frac{\LL}{s(\sigma)}$. 

In this case, we choose $\epsilon_i^2 = 12 \frac{s(\sigma)}{\LL} \left([\fixed^\top \fixed]_{ii}\right)^{-1} 
$ for  $i = 1,2,\ldots,d$. Note that by our assumption that $M^{\top} M$ is diagonal,
\begin{align*}
        \sum_{i=1}^\dimension \epsilon_i^2  
        &= 12 \frac{s(\sigma)}{\LL}\tr((\fixed^\top \fixed)^{-1}) \leq 4,
\end{align*}
so that \eqref{eq:epsilon-requirement} is satisfied.  By an application of  \eqref{eq:glm-ineqxx}, we have
\begin{align*}
        \E \|\theta - \hat{\theta}\|_2^2  &\gtrsim \sum_{i=1}^\dimension  \epsilon_i^{2}  \exp\left[-2\phi\left(\frac{\epsilon_i^2}{12}\frac{\LL}{s(\sigma)}[\fixed^\top \fixed]_{ii}\right)\right] \\
        &= \frac{12}{e^2}  \frac{s(\sigma)}{\LL} \sum_{i=1}^\dimension \frac{1}{[\fixed^\top \fixed]_{ii}} \\
        &\gtrsim   \frac{s(\sigma)}{\LL} \tr((\fixed^\top \fixed)^{-1}).
\end{align*}

\smallskip
\noindent{\bf Case 2:} $\tr((\fixed^\top \fixed)^{-1}) > \frac{1}{3}   \frac{\LL}{s(\sigma)}$.

This case is the more difficult of the two.  We shall make use of the following technical Lemma.  
\begin{lemma}\label{lem:hard-lemma}
    Let $(a_i)_{i = 1,2,\ldots, d}$ be any positive sequence satisfying $\sum_{i=1}^\dimension a_i^{-1} > 4$. Then, there exists a non-negative sequence $(\epsilon_i)_{i = 1, 2, \ldots, d}$ such that $\sum_{i=1}^\dimension \epsilon_i^2 \leq 4$ and $\sum_{i=1}^\dimension \epsilon_i^2 e^{-2\phi(\epsilon_i^2 a_i)} \geq 2 e^{-2}$. 
\end{lemma}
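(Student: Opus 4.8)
The plan is to reduce the problem to a one-dimensional analysis of each summand, followed by an elementary packing argument. Writing $u = \epsilon_i^2$, I would study the single-term function $g_i(u) := u\, e^{-2\phi(u a_i)}$. The key structural observation is that $\phi$ is tailored so that $g_i$ flattens out: for $u a_i \geq 1$ we have $\phi(u a_i) = 1 + \tfrac{1}{2}\log(u a_i)$, hence $e^{-2\phi(u a_i)} = e^{-2}/(u a_i)$ and therefore $g_i(u) = e^{-2}/a_i$, a constant. For $u a_i \leq 1$, substituting $v = \sqrt{u a_i}$ gives $g_i = (v^2/a_i)\, e^{-2v}$, whose derivative $2 v e^{-2v}(1-v)$ is nonnegative on $v \in [0,1]$, so $g_i$ is increasing there. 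Thus $g_i$ increases on $[0, 1/a_i]$ and is constant equal to $e^{-2}/a_i$ thereafter; in particular $\sup_u g_i(u) = e^{-2}/a_i$, attained by \emph{saturating} the $i$-th coordinate at $\epsilon_i^2 = 1/a_i$.

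Setting $b_i := a_i^{-1}$, the hypothesis reads $\sum_i b_i > 4$, the budget constraint is $\sum_i \epsilon_i^2 \leq 4$, and saturating coordinate $i$ spends exactly $b_i$ units of budget to contribute exactly $e^{-2} b_i$ to the objective. The task then becomes to allocate the total budget of $4$ so that the aggregate contribution is at least $2 e^{-2}$. Since saturation delivers contribution $e^{-2}$ per unit of budget uniformly across coordinates, it suffices to select a set of coordinates whose combined saturated budget $\sum_{i \in A} b_i$ lies in the window $[2,4]$, and to saturate exactly those.

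I would then split into two cases according to $\max_i b_i$. If all $b_i < 2$, I add coordinates one at a time, stopping the moment the running budget first reaches $2$; since each increment is $< 2$ and the total exceeds $4$, the stopping budget lands in $[2,4)$, and saturating precisely these coordinates yields objective $e^{-2}\sum_{i \in A} b_i \geq 2 e^{-2}$ within budget. If instead some $b_j \geq 2$, I place all weight on that single coordinate: when $b_j \leq 4$ I saturate it, contributing $e^{-2} b_j \geq 2 e^{-2}$; when $b_j > 4$ saturation would overspend, so I instead set $\epsilon_j^2 = 4$. Then $u a_j = 4/b_j < 1$ puts me in the square-root branch, and the contribution is $4\, e^{-2\sqrt{4/b_j}} = 4\, e^{-4/\sqrt{b_j}} > 4 e^{-2} \geq 2 e^{-2}$, since $\sqrt{b_j} > 2$.

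The main obstacle is precisely this last sub-case: a single coordinate with $b_j > 4$ cannot be saturated without violating the budget, so the clean ``$e^{-2}$ per unit budget'' accounting breaks down, and I must instead exploit the monotone square-root branch of $\phi$ to show that spending the full budget on that coordinate still clears the target. A secondary point requiring care is verifying that the greedy stopping rule in the all-small case never overshoots $4$, which is exactly what the bound $b_i < 2$ guarantees. Everything else—the monotonicity of $g_i$ and its continuity at $u a_i = 1$—is routine calculus.
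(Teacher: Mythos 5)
Your proof is correct and takes essentially the same route as the paper's: both arguments saturate a chosen subset of coordinates at $\epsilon_i^2 = a_i^{-1}$ (so each selected term contributes exactly $e^{-2}a_i^{-1}$, i.e., $e^{-2}$ per unit of the budget $\sum_i \epsilon_i^2 \le 4$), pick that subset greedily so its total budget lands in $[2,4]$, and fall back to spending $\epsilon^2 = 4$ on a single coordinate when an individual $a_j^{-1}$ is too large, invoking monotonicity of $\phi$. The differences are cosmetic: your observation that $u \mapsto u\,e^{-2\phi(u a_i)}$ increases on $[0,a_i^{-1}]$ and is constant thereafter nicely explains \emph{why} saturation is per-coordinate optimal (the paper never states this), your stopping-rule greedy avoids the paper's sorting step, and the paper merges your two large-coordinate subcases ($2 \le a_j^{-1} \le 4$ and $a_j^{-1} > 4$) into one via the single bound $\phi(4a_{t+1}) \le \phi(2)$.
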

\begin{proof}
    Without loss of generality, assume that $a_1 \geq a_2 \geq \cdots \geq a_d > 0$.  If $a_1 \leq 1/4$, then taking $(\epsilon_1,\epsilon_2,\ldots, \epsilon_d) = \left(2,0,0,\ldots, 0\right)$, and noticing that $\phi$ is an increasing function, we conclude
\begin{equation*}
    \begin{split}
        \sum_{i=1}^\dimension \epsilon_i^2 e^{-2\phi(\epsilon_i^2 a_1)} = 4 e^{-2\phi(4 a_1)} \geq 4 e^{-2\phi(1)} > 2 e^{-2}.
    \end{split}
\end{equation*}
Now, in the following we assume that $a_1> 1/4$.  Let $t$ denote the largest integer $k\in \{1,2,\ldots, d\}$ satisfying $\sum_{i=1}^k a_i^{-1} \leq 4$. By the assumption that $\sum_{i=1}^d a_i^{-1} > 4$, we know that there always exists such a $t$, and $t$ will satisfy $t < d$. We set
\begin{align}
    \epsilon_i = \left\{
    \begin{array}{ll}
        a_i^{-1/2} & \text{if $1\leq i \leq t$} \\
        0 & \text{otherwise}
    \end{array}
    \right. ~~~i = 1,2,\ldots,d. \label{eq:design}
\end{align}
By definition, $\sum_{i=1}^d \epsilon_i^2 = \sum_{i=1}^t a_i^{-1} \leq 4$ satisfies \eqref{eq:epsilon-requirement}. This procedure results in
\begin{equation*}
    \begin{split}
        \sum_{i=1}^\dimension \epsilon_i^2 e^{-2\phi(\epsilon_i^2 a_i)} = e^{-2} \sum_{i=1}^{t} \frac{1}{a_i}.
    \end{split}
\end{equation*}

If $\sum_{i=1}^{t} a_i^{-1} \geq 2$, we can immediately see from the above and \eqref{eq:design} that  $\sum_{i=1}^\dimension \epsilon_i^2 e^{-2\phi(\epsilon_i^2 a_i)}  \geq 2 e^{-2}$. 

On the other hand, if $\sum_{i=1}^{t} a_i^{-1} < 2$, this implies that $ a_{t+1}^{-1} \geq 2$. In this case, we simply take $\epsilon_{t+1} = 2$,  and take $\epsilon_i = 0$ for $i\neq t+1$. With this choice, we have
\begin{equation*}
    \begin{split}
        \sum_{i=1}^\dimension \epsilon_i^2 e^{-2\phi(\epsilon_i^2 a_i)} = 4 e^{-2\phi(4 a_{t+1})} \geq 4 e^{-2\phi(2)} = 2 e^{-2}.
    \end{split}
\end{equation*}
The above discussion concludes the proof of Lemma \ref{lem:hard-lemma}.
\end{proof}
By considering the values $a_i = \frac{\LL}{12 s(\sigma)}   [\fixed^\top \fixed]_{ii}$, Lemma \ref{lem:hard-lemma} ensures the existence of $(\epsilon_i)_{i=1,2,\ldots, d}$ satisfying \eqref{eq:epsilon-requirement} and, together with \eqref{eq:glm-ineqxx}, gives $
 \E \|\theta - \hat{\theta}\|_2^2 \gtrsim 1$. 
This completes the proof of Theorem \ref{thm:minimaxBound}.

\subsection{Remarks}\label{subsec:Rmks}
A few remarks are in order.  First, we note that the argument in the previous subsection actually yields the stronger entropic inequality, 
    $$
        \inf_{\hat{\theta}} \sup_{\theta\sim \pi} \sum_{i=1}^\dimension e^{2 h(\theta_i|\hat{\theta}_i)} \gtrsim  \min\left( \frac{s(\sigma)}{\LL}  \tr\left((\fixed^\top \fixed)^{-1}\right) , 1\right)
    $$ 
    which improves  Theorem \ref{thm:minimaxBound} (seen by the max-entropy property of gaussians). Here, the supremum is taken over all distributions $\pi$ supported on the \ltwo ball $\mathbb{B}_2^d(1)$.
    

Second, we remark that our analysis is flexible enough for generalizations to other forms of the GLM. For example, consider observation $X$ drawn from the density
\begin{align*}
        f(x; \theta) = \prod_{i=1}^\observations \left\{h_i(x_i) \exp\left(\frac{x_i \langle \fixedvec,\theta\rangle - \Phi_i(\langle \fixedvec,\theta\rangle)}{s_i(\sigma)}\right)\right\}.
\end{align*}
Suppose Assumption \ref{assumption:CumulantFn} holds for each cumulant function $\Phi_i$ (i.e., $\Phi_i''\leq L$ for each $i=1,\dots, n$). Then, a slight modification in \eqref{eq:fisher-information-value} yields
$$
    [\mathcal{I}_X(\theta)]_{ii} \leq \frac{L}{s^*(\sigma)} [M^\top M]_{ii}
$$
where $s^*(\sigma) = \min_{i=1,2,\ldots,n} s_i(\sigma)$. Following \eqref{eq:glm-ineqxx} and the same choice of $(\epsilon_i)_{i=1,2,\ldots,d}$ in Section \ref{subsec:minimax} with the argument $s(\sigma)$ replaced by $s^*(\sigma)$, we obtain minimax lower bound
\begin{align*}
    \inf_{\hat{\theta}} \sup_{\theta \in \mathbb{B}_2^d(1)} \E\|\theta - \hat{\theta}\|_2^2 \gtrsim \min\left( \frac{ s^*(\sigma)}{\LL}  \tr\left((\fixed^\top \fixed)^{-1}\right) , 1\right).
\end{align*}

In the special case where $s_1(\sigma) = \ldots = s_n(\sigma)$, the same minimax lower bound as Theorem \ref{thm:minimaxBound} is recovered.


\section*{Acknowledgements}
The authors thank Ashwin Pananjady for useful discussions.  This work was supported in part by NSF grants CCF-1704967, CCF-0939370 and CCF-1750430.

\bibliographystyle{IEEEtran}
\bibliography{refs}

\end{document}